\documentclass[11pt]{amsart}

\usepackage{amssymb}
\usepackage{xcolor}
\usepackage{mathabx}
\usepackage{shuffle}
\usepackage[OT2,T1]{fontenc}
\DeclareSymbolFont{cyrletters}{OT2}{wncyr}{m}{n}
\DeclareMathSymbol{\Sha}{\mathalpha}{cyrletters}{"58}
\usepackage{amsmath}
\usepackage[top=1.2in, bottom=1.2in, left=1.4in, right=1.4in]{geometry}
\input xy
\xyoption{all}
\usepackage{amsfonts}
\usepackage{mathrsfs}
\usepackage{tikz-cd}
\usepackage{graphicx}
\usepackage{amscd,amsbsy,amsthm}
\usepackage[all]{xy}
\usepackage[colorlinks,plainpages,urlcolor=magenta]{hyperref}
\usepackage{verbatim}
\usepackage{enumitem}
\usepackage[normalem]{ulem}
\usepackage{tikz}
\usetikzlibrary{arrows,calc}
\tikzset{
  >=stealth',
  help lines/.style={dashed, thick},
  axis/.style={<->},
  important line/.style={thick},
  connection/.style={thick, dotted},
}
\usepackage{stackengine,scalerel}

\makeatletter

\@addtoreset{equation}{section}
\makeatother

\title[The linear positive torsor presentation and arithmetic purity]
{The linear positive torsor presentation and arithmetic purity of strong approximation}

\author{Yijin Wang}
\address{Yijin WANG\newline  School of Mathematics, Shandong University,\newline  Jinan, Shandong 
Province 250100, China}
\email{202420959@mail.sdu.edu.cn}
\date{\today}
\subjclass[2020]{14G12, 14M25, 11G35}
\keywords{toric variety, strong approximation, Brauer--Manin obstruction, arithmetic purity}

\theoremstyle{definition}
\newtheorem{defi}{Definition}[section]

\newtheorem{conj}{Conjecture}

\theoremstyle{plain}
\newtheorem{thm}[defi]{Theorem}
\newtheorem{prp}[defi]{Proposition}
\newtheorem{lem}[defi]{Lemma}
\newtheorem{coro}[defi]{Corollary}

\newcommand{\Pic}{\operatorname{Pic}}

\newcommand{\codim}{\operatorname{codim}}

\newcommand{\Res}{\operatorname{Res}}

\newcommand{\A}{\mathbb A}
\newcommand{\Gm}{\mathbb G_m}
\newcommand{\olK}{\overline K}

\begin{document}

\begin{abstract}
In this article, we define a notion for varieties called linear positive torsor presentation.
For varieties with a linear positive torsor presentation over
number fields, we establish the arithmetic purity of strong approximation
with the Brauer--Manin obstruction off the empty set. As a consequence, we recover arithmetic purity for smooth toric variety with no non-constant global section. Moreover, we classify the chain connected class for rational points on such varieties over any field of characteristic zero.

\end{abstract}

\maketitle
\hypersetup{linkcolor=black,pdftitle={The linear positive torsor presentation and arithmetic purity of strong approximation},pdfauthor={Yijin Wang}}
\tableofcontents
\section{Introduction}
\subsection{Strong approximation and arithmetic purity}
Let $K$ be a number field. We denote by $\Omega_{K}$ the set of places of $K$ and $\infty_{K}$ the Archimedean places of $K$, and $\mathbb{A}^{S}_{K}$ the $S$-adele ring of $K$ for a given finite set of places $S\subset \Omega_{K}$ (possibly empty).
Let $X$ be a geometrically integral variety over $K$, we denote by $pr^{S}:X(\mathbb{A}_{K})\to X(\mathbb{A}^{S}_{K})$ the canonical projection of adelic points.

It is well known that the diagnal embedding $K\to \mathbb{A}_{K}$ has discrete image. However, the strong approximation theorem states that the embedding
\[
K\to \mathbb{A}^{S}_{K}
\]
has dense image for any $S\neq \varnothing$. This suggests the following definition:
\begin{defi} Let $X$ be a geometrically integral variety $X$ over $K$. We say that $X$ satiesfies \textbf{strong approximation off $S$} (SA) if
\[
\overline{X(K)}=pr^{S}(X(\mathbb{A}_{K})).
\]

\end{defi}
For example, the affine space $X=\mathbb{A}^{n}_{K}$ satisfies (SA) off any non-empty $S$.

The (SA) property is not a birational invariance. Wittenberg Has proposed the following conjecture in (\cite{wittenberg2016rational}, Question 2.11):
\begin{conj}
Assume $X$ satisfies strong approximation off a finite set of primes $S$ of $K$. Let $U$ be an open subset of $X$ with
$\mathrm{codim}(X-U,X)\geq2$. Does $U$ also satisfies strong approximation off $S$?
\end{conj}

We say that $X$ satiesfies \textbf{arithmetic purity of strong approximation off $S$} (APSA) once the conjecture is verifies for $X$.

The conjecture has been studied in many cases. The $X=\mathbb{A}_{K}^{n}$ case was independently verified by Cao-Xu in (\cite[Proposition 3.4]{CX18}) and Wei in (\cite{wei2021strong}, Lemma 2.1). The semi-simple simply connected quasi-split linear
algebraic groups is verified by Cao, Liang and Xu in (\cite{CLX19}).  Cao and Huang further extend it to the isotropic case in (\cite{CH20}).  
\subsection{Arithmetic purity with Brauer-Manin obstruction}
Let $X$ be a smooth geometrically integral variety over $K$. 
We denote by
\[
\mathrm{Br}(X):=H^2_{\acute{e}t}(X,\mathbb{G}_{\mathrm{m}})
\]
the cohomological Brauer group of $X$. We also define the \textbf{algebraic Brauer group} of $X$ as:
\[
\mathrm{Br}_{1}(X):=\mathrm{Ker}(\mathrm{Br}(X)\to \mathrm{Br}(\overline{X}))
\]

Given a local point $P_{v}\in X(K_{v})$, we have the evaluation map $\mathrm{Br}(X)\to \mathrm{Br}(K_{v})$ defined by the pull-back of $P_{v}:\mathrm{Spec}(K_{v})\to X$ on the cohomology groups. 
We denote the pull-back of $b\in \mathrm{Br}(X)$ by $b(P_v)$.

We then have the Brauer-Manin pairing:
$$
\langle-,-\rangle: X(\mathbb{A}_{K})\times\mathrm{Br}(X)\to \mathbb{Q}/\mathbb{Z},\  \left( (P_{v}),b\right)\mapsto \sum_{v\in \Omega_{K}}\mathrm{inv}_{v}b(P_{v}).
$$
where $\mathrm{inv}_{v}:\mathrm{Br}(K_{v})\to \mathbb{Q}/\mathbb{Z}$ is the local invariant map. 
The Brauer-Manin pairing is well defined (see \cite[Proposition 8.2.1]{Po23}).

For any subset $B\subset \mathrm{Br}(X)$, 
we define
\[
X(\mathbb{A}_{K})^{B}:=\{(P_{v})\in X(\mathbb{A}_{K})\ |\ \langle (P_{v}),b\rangle=0, \forall b\in B\}.
\]
The class field theory implies that $X(K)\subset X(\mathbb{A}_{K})^{B}$, and that $X(\mathbb{A}_{K})^{B}$ is closed in $X(\mathbb{A}_{K})$. For these facts and more about the Brauer-Manin pairing, see \cite[Chap. 8]{Po23} and \cite[Chap. 13]{CT21}. 

\begin{defi}We say that $X$ satisfies \textbf{strong approximation with Brauer-Manin obstruction} off $S$ if
\[
\overline{X(K)}=pr^{S}(X(\mathbb{A}_{K})^{\mathrm{Br}(X)}).
\]
\end{defi}

In this article, we study the following form of arithmetic purity. 
\begin{defi}
A smooth $K$-variety $X$ satisfies \textbf{arithmetic purity of strong approximation with the Brauer--Manin obstruction} ($\mathrm{APSABM}$) off $S$ if, for every open subvariety $U\subseteq X$ satisfying
\[
  \codim_X(X\setminus U)\geq 2,
\]
we have
\[
\overline{U(K)}=pr^{S}(U(\mathbb{A}_{K})^{\mathrm{Br}(U)}).
\]
\end{defi}
When the group $\mathrm{Br}$ is replaced by $\mathrm{Br_{1}}$, we say that $X$ satiesfies ($\mathrm{APSABM}_{1}$) in short.

For a smooth toric variety $X$ with $\overline{K}[X]^{\star}=\overline{K}^{\star}$, wei has showed that $X$ satiesfies $(\mathrm{APSABM}_{1})$ off \textbf{non-empty} $S$ in \cite{wei21}. For a proper smooth toric variety $X$, chen has verified that $X$ satisfies $(\mathrm{APSABM}_{1})$ off $\varnothing$ in (\cite{Chen24}). Recently, wei, xu and zhu generalized the case of proper toric varieties to the case of toric varieties with trivial global sections in (\cite{WXZ26}).

In this article, we prove the $(\mathrm{APSABM})$ property for a special class of algebraic varieties, which includes smooth toric varieties with trivial global sections.
\begin{defi}Let $X$ be a smooth geometrically integral variety over $K$. A \textbf{positive linear torsor presentation} of $X$ consisists of datum $(G,V,Y,q,\lambda)$, where

1. $G$ denotes a connected linear algebraic group or a group of multiplicative type;

2. $V$ is an affine space equipped with a linear $G$-action $\rho:G\to \mathrm{GL}(V)$; 

3. $Y\subset V$ a $G$-invariant open subscheme with $\mathrm{codim}_{V}(V-Y)\geq 2$; 

4. $q:Y\to X$ a $G$-torsor;

5. $\lambda:\mathbb{G}_{m}\to Z(G)$ a central cocharacter such that all weights of $\lambda$ on $V$ are positive.
\end{defi}
The following is the main theorem of this article.
\begin{thm}\label{main thm}
Assume that $X$ admits a positive linear torsor presentation. Then $X$ satisfies $(\mathrm{APSABM})$ off $\varnothing$. Moreover, if the structure group $G$ is of multiplicative type, then $X$ satisfies $(\mathrm{APSABM}_{1})$ off $\varnothing$.
\end{thm}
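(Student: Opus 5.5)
Given an open $U\subseteq X$ with $\codim_X(X\setminus U)\geq 2$, I will pass to the torsor. Put $W:=q^{-1}(U)\subseteq Y\subseteq V$. Since $q$ is smooth and surjective, $\codim_V(V\setminus W)\geq 2$. So $W$ is the complement of a codimension $\geq 2$ closed subset of affine space, and $W\to U$ is a $G$-torsor. The proof then has three steps.

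\textbf{Step 1: approximation on $W$ and its twists.} By the result of Cao--Xu and Wei, $W$ and more generally each twist $W^\sigma\subseteq V^\sigma$ for $\sigma\in H^1(K,G)$ satisfy strong approximation off any non-empty $S$. Here $V^\sigma$ is again an affine space, being the twist of a linear representation by Hilbert 90 for $\mathrm{GL}(V)$. We need approximation off $\varnothing$, so the missing places must be handled separately.

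\textbf{Step 2: removing the need for $S\neq\varnothing$ via $\lambda$.} Fix a place $v_0$ and a rational point $w$ of $W^\sigma$ that approximates the adelic point away from $v_0$. The central cocharacter $\lambda$ acts on $V^\sigma$ with positive weights and commutes with the twist, since it is central. Hence $\lambda(t)\cdot w$ has the same image in $U$ as $w$. On the other hand, $\lambda(t)$ contracts $V^\sigma$ toward $0$ as $t\to 0$ and expands it as $t\to\infty$. Using strong approximation for $\Gm$, or rather weak approximation together with the freedom to choose $t\in K^\star$, I will adjust $w$ at $v_0$, so that the image in $U(K_{v_0})$ lands in any given open set. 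Concretely, I approximate the local point at $v_0$ after first translating it by a suitable $\lambda(t_{v_0})$, and I use the $\Gm$-orbit structure (the quotient map $V\setminus 0\to$ weighted projective space) to show that the images in $U$ of points of a single $\lambda$-orbit already cover the needed neighbourhoods. This is the main obstacle. I expect to need a fibration argument, namely $W\to W/\lambda(\Gm)$, with the positivity of the weights guaranteeing that $W/\lambda(\Gm)$ is proper over the relevant affine quotient, so that integrality conditions at $v_0$ become vacuous.

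\textbf{Step 3: descent.} I will use the descent theory of Harari--Skorobogatov and Colliot-Thélène--Xu for torsors under connected linear groups. Its statement is
\[
U(\mathbb A_K)^{\mathrm{Br}(U)}=\bigcup_{\sigma\in H^1(K,G)} q^\sigma\bigl(W^\sigma(\mathbb A_K)^{\mathrm{Br}(W^\sigma)}\bigr),
\]
where one uses $\mathrm{Br}_1$ and $\mathrm{Br}_{G}$ when $G$ is connected linear. When $G$ is of multiplicative type, the descent uses only $\mathrm{Br}_1(U)$, which gives the $(\mathrm{APSABM}_1)$ statement. Since $W^\sigma$ is an open in affine space with complement of codimension $\geq 2$, purity gives $\bar K[W^\sigma]^\star=\bar K^\star$ and $\mathrm{Br}(W^\sigma)=\mathrm{Br}(K)$. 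Hence $W^\sigma(\mathbb A_K)^{\mathrm{Br}}=W^\sigma(\mathbb A_K)$. Combining this with Step 2 applied to each $W^\sigma$ shows that every point of $U(\mathbb A_K)^{\mathrm{Br}(U)}$ lies in $\overline{U(K)}$.
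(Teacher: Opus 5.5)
\textbf{The gap is Step 2.} Your outline matches the paper's architecture: pass to $W=q^{-1}(U)$, descend to a twist $W^\sigma$ via Cao--Demarche--Xu using $\mathrm{Br}(W^\sigma)=\mathrm{Br}(K)$, approximate on $W^\sigma$, and push down using $\lambda$-invariance. Step 3 is fine. Step 2, however, carries the whole arithmetic content, and the mechanism you sketch cannot work.

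\begin{itemize}
\item For $t\in K^\star$, the point $\lambda(t)w$ is another rational point with $q^\sigma(\lambda(t)w)=q^\sigma(w)$ at \emph{every} place. So the global freedom in $t$ changes nothing.
\item For the same reason, a single $\lambda$-orbit maps to one point of $U$ and covers no neighbourhood.
\item The only genuine freedom is local. You may enlarge the neighbourhood $N_{v_0}$ of $\eta_{v_0}$ to the cone $\bigcup_{t\in K_{v_0}^\star}\lambda(t)N_{v_0}$, which has the same image in $U(K_{v_0})$. But you then need a rational point of $W^\sigma$ lying in that cone at $v_0$ \emph{and} in the prescribed neighbourhoods, with integrality, at all other places. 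Strong approximation off $\{v_0\}$ gives no control at $v_0$ at all.
\item Your fallback via $W\to W/\lambda(\Gm)$ only restates the problem. It becomes strong approximation off $\varnothing$ for an open subset of a weighted projective space with complement of codimension $\ge 2$, which is itself a special case of the theorem. Properness makes integrality at $v_0$ automatic, but what is missing is an open approximation condition at $v_0$, not integrality.
\end{itemize}

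\textbf{The missing ingredient} is the paper's scaled approximation, Proposition~\ref{prp:scaled-approximation}. It says: for nonempty open $W_f\subseteq Y(\mathbb{A}^f_K)$ and $W_\infty\subseteq Y(K_\infty)$, there exist $y\in Y(K)$ and $r\in\mathbb{Q}_{>0}$ with $y_f\in W_f$ and $y_\infty\in\lambda(r)W_\infty$. Then $\eta'=(y_f,\lambda(r)^{-1}y_\infty)$ lies near $\eta$ and has the same image as $y$.

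Its proof takes $S=\infty_K$ and reduces to $K=\mathbb{Q}$ by Weil restriction, so that there is a single real place where signs make sense. It then turns the cone condition into a positivity condition. The input is not strong approximation off a nonempty $S$, which your Step 1 invokes. It is Cao--Xu's approximation at the finite places with all real coordinates positive. The steps are:
\begin{itemize}
\item Pick $b\in Y(\mathbb{Q})$ with $b_f\in W_f$ and positive real coordinates.
\item Pull back along $\Phi_b(t)=(b_it_i^{a_i})$, which satisfies $\Phi_b(ct)=\lambda(c)\Phi_b(t)$.
\item Choose a convex ball $B\subseteq\Phi_b^{-1}(W_\infty)\cap\mathbb{R}^n_{>0}$ and rational linearly independent $v_j\in B$, and set $\theta(z)=\sum_j z_jv_j$.
\item For $z\in\mathbb{R}^n_{>0}$, convexity gives $\theta(z)/r(z)\in B$ with $r(z)=\sum_j z_j$, hence $\Phi_b(\theta(z))\in\lambda(r(z))\Phi_b(B)\subseteq\lambda(r(z))W_\infty$.
\item A second application of Cao--Xu yields a positive rational $z_0$ with $\Phi_b(\theta(z_0))\in W_f$.
\end{itemize}
Without an argument of this kind, which controls the point at the excluded places up to $\lambda$-scaling, Step 2 remains unproved.
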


\begin{coro}[see also \cite{WXZ26}] \label{Purity of toric} Let $X$ be a smooth toric variety with $K[X]=K$, then $X$ admits a positive linear torsor presentation. Hence $X$ satiesfies $(\mathrm{APSABM}_{1})$ off $\varnothing$.
\end{coro}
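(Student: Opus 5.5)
We use the Cox construction of a smooth toric variety. Let $X$ be a smooth toric variety over $K$ with torus $T$ and fan $\Sigma$ in $N_{\mathbb R}$, and let $\Sigma(1)$ be its set of rays. The Galois group $\Gamma_K$ acts on $\Sigma(1)$.

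\textbf{The data.} Take $V=R_{K}:=\prod$ of Weil restrictions attached to the Galois orbits on $\Sigma(1)$. Then $V$ is an affine space of dimension $|\Sigma(1)|$, and over $\overline K$ it has coordinates $x_\rho$, $\rho\in\Sigma(1)$. Let $T_{\Sigma(1)}\subset V$ be the corresponding quasi-split torus, with character module $\mathbb Z^{\Sigma(1)}$. Since $K[X]=K$, no character of $M=\mathrm{Hom}(T,\Gm)$ has constant sign on all rays, so $\overline K[X]^\star=\overline K^\star$. Hence the map $M\to\mathbb Z^{\Sigma(1)}$ is injective with torsion-free cokernel $\Pic(\overline X)$, giving an exact sequence of $\Gamma_K$-modules
\[
0\to M\to \mathbb Z^{\Sigma(1)}\to \Pic(\overline X)\to 0.
\]
Dually, $G:=\ker(T_{\Sigma(1)}\to T)$ is a group of multiplicative type whose character module is $\Pic(\overline X)$, and it acts linearly and diagonally on $V$.

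Let $Y=V\setminus Z$, where $Z$ is the union of the loci $\{x_\rho=0\ \forall\rho\in S\}$ over subsets $S\subset\Sigma(1)$ not contained in a single cone. Any two rays do span $\mathbb R_{\ge0}$-combinations, but a pair $S$ with $|S|=1$ always lies in the cone $\rho$ itself. So every component of $Z$ is cut out by at least two coordinates, and $\codim_V(V-Y)\ge 2$. $Z$ is Galois-stable, so $Y$ is defined over $K$.

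Because $\Sigma$ is smooth, Cox's theorem (or its version over non-closed fields, as in Colliot-Thélène–Sansuc for universal torsors) gives that $q:Y\to X$ is a $G$-torsor, in fact a universal torsor. The key input is that $\Sigma$ is smooth and $X$ has no nonconstant units. Descent to $K$ holds because all the data are $\Gamma_K$-equivariant.

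\textbf{The cocharacter (main obstacle).} The cocharacters of $G$ are $\mathrm{Hom}(\Pic\overline X,\mathbb Z)$, i.e. vectors $a\in\mathbb Z^{\Sigma(1)}$ with $\sum_\rho a_\rho u_\rho=0$ in $N$. The weight of $\lambda_a$ on $x_\rho$ is $a_\rho$. So we need a \emph{Galois-invariant} relation $\sum a_\rho u_\rho=0$ with all $a_\rho>0$; $G$ is commutative, so centrality is automatic.

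First, over $\overline K$, such a relation exists by Gordan/Farkas duality. If there were none, there would be $m\in M_{\mathbb R}$ with $\langle m,u_\rho\rangle\ge0$ for all $\rho$ and $m$ not in the span-orthogonal of all rays. Indeed, the cone spanned by the $u_\rho$ is not a linear subspace if and only if such a separating functional exists. Rationality then gives $m\in M$ with $\langle m,u_\rho\rangle\ge 0$ for all $\rho$, not all zero. The character $\chi^m$ would then be a nonconstant regular function on $X$, since it has nonnegative order along every boundary divisor, contradicting $K[X]=K$. (If $\langle m,u_\rho\rangle=0$ for all $\rho$ but $m\neq0$, then $\chi^m$ is a nonconstant unit, again a contradiction.) So the rays positively span $N_{\mathbb R}$, and there are $a_\rho\in\mathbb Z_{>0}$ with $\sum a_\rho u_\rho=0$.

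Next, average over the finite group through which $\Gamma_K$ acts: $a'_\rho=\sum_{\sigma}a_{\sigma^{-1}\rho}$. This vector is still a positive relation, since $\Gamma_K$ acts on $N$ compatibly and permutes the $u_\rho$. It is also Galois-invariant, so $\lambda_{a'}:\Gm\to G$ is defined over $K$ and has all weights positive.

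This yields the presentation $(G,V,Y,q,\lambda)$ with $G$ of multiplicative type. The second part of Theorem~\ref{main thm} then gives $(\mathrm{APSABM}_1)$ off $\varnothing$.
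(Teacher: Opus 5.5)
Your argument is correct and follows the paper's route. You use the Cox torsor $Y\to X$ with $\codim_V(V\setminus Y)\geq 2$ (since singletons lie in cones), obtain a strictly positive relation $\sum a_\rho u_\rho=0$ by cone duality from $\overline K[X_{\overline K}]=\overline K$, average it over Galois, and apply Theorem~\ref{main thm} with $G$ of multiplicative type. The only real difference is that you invoke Stiemke's alternative, whereas the paper shows $C^\vee=0$, hence $C=N_{\mathbb R}$, and then sums nonnegative expressions for the $-v_\rho$.

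One side claim of yours is false, though harmless: $\Pic(\overline X)$ need not be torsion-free. Take the smooth fan in $\mathbb R^2$ whose only nonzero cones are the four rays through $(\pm1,\pm1)$. These rays span an index-$2$ sublattice of $N$, so $\Pic$ has $\mathbb Z/2$ torsion, as the paper itself warns. You never use this claim, since you only need $G$ of multiplicative type, and $\mathrm{Hom}(\Pic\overline X,\mathbb Z)=\ker(\mathbb Z^{\Sigma(1)}\to N)$ holds regardless.
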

\subsection{The positive linear torsor presentation and rational curves}

Let $X$ be a smooth geometrically integral variety over a field $K$ and $x,y\in X(K)$. We denote $x\sim_{\mathbb{P}^1} y$ if there is a rational curve $f:\mathbb{P}^1_{K}\to X$ such that $f(0)=x$ and $f(\infty)=y$. Then let $\sim_{\mathbb{P}^1,\mathrm{ch}}$ be the relation generated by finite chains of such morphisms. This is exactly the $R$-equivalence when $X$ is proper. 

It is very interesting to describe $X(K)/\sim_{\mathbb{P}^1\mathrm{ch}}$ for a rationally connected variety over a non-close field. For example, when $X$ is a smooth proper toric variety over a number field $K$, we have an isomorphism of pointed sets:
\begin{equation}\label{Torsor morphism}
X(K)/\sim_{\mathbb{P}^1,\mathrm{ch}}\cong H^1(K,G).
\end{equation}
where $\hat{G}=\mathrm{Pic}(\overline{X})$. See (\cite{colliot1977r}, Theorem 2, Proposition 13(ii)). 

Let us describe the morphism in (\ref{Torsor morphism}). In general, let $G$ be a linear algebraic group and $q:Y\to X$ a $G$-torsor. Then we  have a morphism of pointed sets:
\[
\delta_{q}:X(K)\to H^1(K,G), \quad x\mapsto [q^{-1}(x)].
\]
As we will see later, the morphism descents to 
\[
X(K)/\sim_{\mathbb{P}^1,\mathrm{ch}}\to H^1(K,G).
\]
In this article, we will prove the following statement:
\begin{thm}\label{P^1-connected}
Let $X/K$ be a smooth geometrically integral variety over a field of characteristic 0. Assume that $X$ admits a positive linear torsor presentation $(G,V,Y,q,\lambda)$. For $x_,y\in X(K)$, the following are equivalent:
\begin{enumerate}
\renewcommand{\labelenumi}{(\roman{enumi})}
\item $\delta_q(x)=\delta_q(y)$ in $H^1(K,G)$;
\item both $x$ and $y$ have lifts in $Y^\xi(K)$ for one twist $q^{\xi}:Y^{\xi}\to X$;
\item $x\sim_{\mathbb{P}^1} y$;
\item $x\sim_{\mathbb{P}^1,\mathrm{ch}} y$.
\end{enumerate}

Moreover, $\delta_{q}$ is surjective and induces a bijection of sets:
\[
X(K)/\sim_{\mathbb{P}^{1},\mathrm{ch}}\cong H^1(K,G).
\]
\end{thm}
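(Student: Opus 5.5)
We prove the cycle (i)$\Rightarrow$(ii)$\Rightarrow$(iii)$\Rightarrow$(iv)$\Rightarrow$(i), then surjectivity of $\delta_q$; the bijection follows.

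\emph{(i)$\Rightarrow$(ii).} This is standard torsor twisting. Let $\xi$ be a cocycle representing $\delta_q(x)=\delta_q(y)$. Then $q^{-1}(x)\cong q^{-1}(y)$ is the torsor $P_\xi$. The twist $Y^\xi={}_{\xi}Y=(P_\xi\times Y)/G$, viewed as a torsor under the inner form $G^\xi$ and mapping to $X$, has fibre over $x$ whose class is $\delta_q(x)-\xi$, which is trivial. So this fibre has a $K$-point, and likewise for $y$.

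\emph{(ii)$\Rightarrow$(iii).} This is the heart of the argument. The key point is that the twisting is compatible with the linear structure. Since $G$ acts linearly on $V$, the twist $V^\xi$ is a $K$-form of an affine space with a linear structure, hence by Hilbert 90 for $\mathrm{GL}$ again an affine space. Moreover $Y^\xi\subset V^\xi$ is open with complement of codimension $\ge2$. Since $\lambda$ is central, it remains an action of $\mathbb{G}_m$ on $V^\xi$ by positive weights $w_i$. Take lifts $\tilde x,\tilde y\in Y^\xi(K)$.

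First consider the line $L(t)=(1-t)\tilde x+t\tilde y$ in $V^\xi$. We want to avoid the bad locus $V^\xi\setminus Y^\xi$. Since that locus has codimension $\ge 2$ and $K$ is infinite, we may replace $\tilde y$ by $\lambda(c)\tilde y$ for a general $c\in K^*$; this does not change its image in $X$. If the line still meets $V^\xi\setminus Y^\xi$, we instead use a two-step path through a general auxiliary point. The expected cleaner route is to use the weighted cone. Consider the map
\[
\phi:\mathbb{A}^2\setminus\{0\}\to V^\xi,\qquad (s,t)\mapsto \sum_i s^{w_i}\tilde x_i+t^{w_i}\tilde y_i' ,
\]
suitably homogenised in the coordinates $\tilde x_i,\tilde y_i$ of the weight decomposition, so that it is equivariant for the $\mathbb{G}_m$-action of weight $1$ on $(s,t)$ and $\lambda$ on $V^\xi$. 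Composed with $q^\xi$, this $\mathbb{G}_m$-equivariance makes $\phi$ descend to a map from a weighted projective line, or after homogenising, from $\mathbb{P}^1$ to $X$. It sends $[1:0]\mapsto x$ and $[0:1]\mapsto y$, provided the image of $\phi$ avoids $V^\xi\setminus Y^\xi$.

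The main obstacle is ensuring this avoidance. I would handle it by a dimension count. The family of such curves, parametrised by $\lambda$-translates and by auxiliary general intermediate points, sweeps out a space in which the curves meeting a codimension $\ge2$ closed subset form a proper closed subset. A general $K$-member therefore works, using that $K$ is infinite and that the parameter space is rational. Positivity of the weights is exactly what guarantees that the cone over the $\mathbb{G}_m$-orbits closes up at $0\in V$, making the quotient curve proper (isomorphic to $\mathbb{P}^1$) rather than affine.

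\emph{(iii)$\Rightarrow$(iv)} is trivial.

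\emph{(iv)$\Rightarrow$(i).} It suffices to treat a single $f:\mathbb{P}^1\to X$. The pullback $f^*Y$ is a $G$-torsor over $\mathbb{P}^1_K$. We show that its fibres at $0$ and $\infty$ have the same class. One way is to note that $f^*Y\to\mathbb{P}^1$ extends, via the $\mathbb{G}_m$-structure, from $\mathbb{A}^1$, and torsors over $\mathbb{A}^1_K$ in characteristic $0$ are constant (Raghunathan--Ramanathan, or homotopy invariance). Alternatively, one can use the known fact that evaluation of $G$-torsors over $\mathbb{P}^1$ at two rational points agrees in $H^1(K,G)$, by Grothendieck--Harder type results: for a connected linear group or a group of multiplicative type, every torsor over $\mathbb{P}^1_K$ restricted to $\mathbb{A}^1$ is constant, and $0,\infty$ both lie in suitable affine lines. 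Characteristic $0$ is used precisely here.

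\emph{Surjectivity.} Given $\xi\in H^1(K,G)$, the twist $Y^\xi$ is a nonempty open subset of the affine space $V^\xi$, so it has a $K$-point because $K$ is infinite. Its image $x$ in $X$ satisfies $\delta_q(x)=\xi$, up to the sign and twisting convention. Combined with (i)$\Leftrightarrow$(iv), this gives the stated bijection.
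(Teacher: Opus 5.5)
Your arguments for (i)$\Rightarrow$(ii), (iii)$\Rightarrow$(iv) and surjectivity agree with the paper. Your second argument for (iv)$\Rightarrow$(i) is a valid alternative to the paper's $\underline{\operatorname{Isom}}_G$-torsor argument. Indeed, $0$ and $\infty$ both lie in $\mathbb{P}^1_K\setminus\{1\}\cong\mathbb{A}^1_K$, and in characteristic $0$ the map $H^1(K,G)\to H^1(\mathbb{A}^1_K,G)$ is bijective, both for connected linear $G$ (reduce to the reductive quotient and use Raghunathan--Ramanathan) and for $G$ of multiplicative type.

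The genuine gap is in (ii)$\Rightarrow$(iii). Your curve $\phi(s,t)=(s^{w_i}\tilde x_i+t^{w_i}\tilde y_i)_i$ has no real freedom. Replacing $\tilde y$ by $\lambda(c)\tilde y$ gives $\phi(s,ct)$, which is the same curve reparametrised. A ``two-step path through a general auxiliary point'' gives a chain of two rational curves, which proves (iv) but not (iii). The dimension count you appeal to is never carried out for an actual family through both endpoints, and within degree $w_i$ no such family exists in general: if all weights are $1$, $\phi$ is just the line through $x$ and $y$. For example, let $G=\mathbb{G}_m$ act by scalars on $V=\mathbb{A}^4_K$, let $Z\subset V$ be a $2$-dimensional linear subspace and $Y=V\setminus Z$, so that $X=\mathbb{P}^3_K\setminus\ell$ for a line $\ell$. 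If the line through $x$ and $y$ meets $\ell$, every curve your construction produces meets $Z$.

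The missing idea is to double the degree, which leaves room for a free middle coefficient. Set
\[
F(c,S,T)=\bigl(\tilde x_iS^{2w_i}+c_i(ST)^{w_i}+\tilde y_iT^{2w_i}\bigr)_i,\qquad c\in C=\mathbb{A}^n_K .
\]
Then $F(c,rS,rT)=\lambda(r^2)F(c,S,T)$. So whenever $F(c,-,-)$ lands in $Y^\xi$, the composite $q^\xi\circ F(c,-,-)$ descends to some $f:\mathbb{P}^1_K\to X$ with $f(0)=x$ and $f(\infty)=y$.

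Since $Z=V^\xi\setminus Y^\xi$ is $\lambda$-stable, the locus $\mathcal M=\{(c,[S:T]):F(c,S,T)\in Z\}$ is a well-defined closed subset of $C\times\mathbb{P}^1_K$. Its fibres over $0$ and $\infty$ are empty, because $\tilde x,\tilde y\notin Z$. For each fixed $\tau=T/S\in\mathbb{G}_m$, the map $c\mapsto F(c,1,\tau)$ is an affine isomorphism $C\to V^\xi$. Hence $\dim\mathcal M\le\dim Z+1\le n-1$, and the image of $\mathcal M$ in $C$ is a proper closed subset. Because $K$ is infinite, you can choose $c_0\in C(K)$ outside it, which gives a single $\mathbb{P}^1$ joining $x$ and $y$.
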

\begin{coro} Let $X$ be a smooth toric variety over a field of characteristic zero with $K[X]=K$. Let $\hat{G}=\mathrm{Pic}(\overline{X})$. Then we have a natural isomorphism of sets:
\[
X(K)/\sim_{\mathbb{P}^1,\mathrm{ch}}\cong H^1(K,G).
\]
\end{coro}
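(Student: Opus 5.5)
The statement to prove is the last corollary: for a smooth toric variety $X$ over a field of characteristic zero with $K[X]=K$, we have $X(K)/\sim_{\mathbb{P}^1,\mathrm{ch}}\cong H^1(K,G)$ with $\hat G=\mathrm{Pic}(\overline X)$. The plan is to deduce it directly from Theorem \ref{P^1-connected}. The only input needed is the existence of a positive linear torsor presentation $(G,V,Y,q,\lambda)$ whose structure group is the torus $G$ dual to $\mathrm{Pic}(\overline X)$. This existence is the first claim of Corollary \ref{Purity of toric}. That part is purely geometric, so I would prove it over an arbitrary field of characteristic zero rather than only over number fields.

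For the presentation I use the Cox construction. Let $T$ be the open torus and $\Sigma$ the fan, with the Galois action on the set $\Sigma(1)$ of rays. Set $V=\mathbb{A}^{\Sigma(1)}$, which is twisted into a quasi-trivial $K$-form: it is a Weil restriction of affine spaces over the orbits, and it is an affine space over $K$. The hypothesis $K[X]=K$ (equivalently $\overline K[X]^\star=\overline K^\star$ and no nonconstant regular functions) means the rays span $N_{\mathbb{R}}$ and generate no nonzero linear subspace meeting... more precisely, the support of $\Sigma$ does not lie in a half-space. This gives an exact sequence
\[
0\to \overline K[T]^\star/\overline K^\star=M\to \mathbb{Z}^{\Sigma(1)}\to \mathrm{Pic}(\overline X)\to 0
\]
of Galois modules, with $M$ injecting. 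Dually, $G\subset \mathbb{G}_m^{\Sigma(1)}$ (Weil-restricted) is a group of multiplicative type acting linearly on $V$. Take $Y=V\setminus Z(\Sigma)$, the complement of the irrelevant locus. Smoothness of $X$ makes $Y\to X$ a $G$-torsor; this is Cox's theorem, and it descends from $\overline K$ because everything is Galois-equivariant. The codimension bound $\mathrm{codim}(V-Y)\ge2$ holds because $Z(\Sigma)$ is a union of coordinate subspaces $\{x_\rho=0,\ \rho\in C\}$ with $C$ not contained in a cone. A single ray is itself a cone, so every such $C$ has at least two elements.

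The step I expect to be the main obstacle is the positive cocharacter $\lambda$. I need a Galois-invariant element of $\mathrm{Hom}(\mathrm{Pic}(\overline X),\mathbb{Z})$, that is, a cocharacter of $G$ defined over $K$, pairing positively with every class $[D_\rho]$. Dually, I need a Galois-invariant vector $(a_\rho)\in\mathbb{Z}_{>0}^{\Sigma(1)}$ that lies in $M^\perp$, i.e. with $\sum_\rho a_\rho u_\rho=0$. Here I use the hypothesis on the global sections. If no strictly positive relation existed, Gordan's alternative would give $m\in M_{\mathbb{R}}$ with $\langle m,u_\rho\rangle\ge0$ for all $\rho$ and $m\ne0$ on some ray. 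Taking $m$ rational, $\chi^{m}$ would then be a nonconstant regular function on $X$, which contradicts $K[X]=K$ (after base change to $\overline K$). So a positive rational relation exists. Averaging it over the Galois orbit and clearing denominators gives an invariant positive integral relation, and hence $\lambda$ over $K$. Since $G$ is commutative, $\lambda$ is automatically central.

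With all five conditions verified, Theorem \ref{P^1-connected} applies to $(G,V,Y,q,\lambda)$. It gives that $\delta_q$ is surjective and that it induces the bijection $X(K)/\sim_{\mathbb{P}^1,\mathrm{ch}}\cong H^1(K,G)$ with $\hat G=\mathrm{Pic}(\overline X)$. The map is natural because $\delta_q$ is defined by the universal torsor class.
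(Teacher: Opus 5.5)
Your proposal is correct and follows the paper's route: you build the Cox torsor presentation (Proposition \ref{cox torsor}), obtain $\lambda$ from a Galois-averaged strictly positive relation $\sum a_\rho v_\rho=0$ (Lemma \ref{positive weight}), and then apply Theorem \ref{P^1-connected}. The differences are cosmetic. You derive the positive relation from a theorem of the alternative rather than from the paper's argument that $C=N_{\mathbb R}$; the version you state is Stiemke's lemma, not Gordan's. Also, $G$ is in general only of multiplicative type, not a torus, since $\mathrm{Pic}(\overline X)$ can have torsion; this is harmless because the definition of a presentation allows it.
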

\section{A scaled version of strong approximation}
In this section, we proposed a modified version of strong approximation theorem for $U\subset \mathbb{A}^{n}_{K}$ with $\mathrm{codim}_{X-U}(X)\geq 2$.
\begin{prp}\label{prp:scaled-approximation}
Let $V$ be a finite-dimensional $K$-vector space, let
\[
  \lambda:\Gm\longrightarrow\operatorname{GL}(V)
\]
be a $K$-cocharacter whose weights are strictly positive, and let $Y=V\setminus Z$ be a $\lambda$-stable open subset with $\codim_{V}Z\geq2$.

Given non-empty open subsets
\[
W_{f}\subset Y(\mathbb{A}^{f}_{K}),\quad  W_{\infty}\subset Y(K_{\infty}).
\]
There exists $y\in Y(K)$ and $r\in \mathbb{Q}_{>0}$ such that
\[
y_{f}\in W_{f},\quad y_{\infty}\in \lambda(r)W_{\infty}.
\]
\end{prp}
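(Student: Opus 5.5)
The plan is to use the scaling $\lambda(r)$, $r\to\infty$, to turn the archimedean condition into a counting problem for lattice points in an expanding region, and then to remove the codimension-$\geq 2$ locus $Z$ at almost all finite places with a geometric (Ekedahl-type) sieve. Strong approximation off $S=\{\infty\}$ alone controls only the finite adelic component and gives no density in $Y(K_\infty)$. The factor $\lambda(r)$ is exactly what restores flexibility at $\infty$, because positive weights make $\lambda(r)W_\infty$ an open set that expands in every coordinate direction as $r\to\infty$.

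\emph{Step 1: finite-adelic reduction.} Fix an $O_K$-model $\mathcal V\cong\mathbb A^n_{O_K}$ compatible with the weight decomposition of $\lambda$, and let $\mathcal Z$ be the closure of $Z$. Shrinking $W_f$, I may assume it has the form
\[
W_f=\prod_{v\in T}U_v\times\prod_{v\notin T\cup\infty_K}\mathcal Y(O_v),
\]
where $T$ is a finite set of finite places, $U_v\subset Y(K_v)$ is open, and $\mathcal Y=\mathcal V\setminus\mathcal Z$. I will also shrink $W_\infty$ to a small box. By strong approximation of $V$ off $\infty$, I choose $y_0\in V(K)$ close to given points of $U_v$ for $v\in T$. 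Then there is a nonzero ideal $\mathfrak N$ such that every $y\in y_0+\mathfrak N O_K^n$ lies in $U_v$ for all $v\in T$ and lies in $V(O_v)$ for all $v\notin T$. It then remains to find $y$ in this coset with
\[
y\in\lambda(r)W_\infty\qquad\text{and}\qquad y \bmod \mathfrak p\notin\mathcal Z(\kappa(\mathfrak p))\ \text{for all }\mathfrak p\notin T.
\]

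\emph{Step 2: counting in the expanding region.} I take $r\in\mathbb Q_{>0}$ large. The weights $a_i\geq1$ give that $\lambda(r)W_\infty$ is a box whose side in coordinate $i$ has length $\asymp r^{a_i}$. Since the weights are positive, all sides tend to infinity. The number of points of the lattice coset $y_0+\mathfrak N O_K^n$ in this box is therefore asymptotically $c\,r^{\sum a_i[K:\mathbb Q]}$ with $c>0$.

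\emph{Step 3: sieve out $\mathcal Z$.} I apply Ekedahl's geometric sieve, as used by Cao--Xu and Wei for $\mathbb A^n$, to the coset and the box $\lambda(r)W_\infty$. Because $\codim Z\geq 2$, the proportion of points whose reduction lies in $\mathcal Z$ at some prime of norm greater than $M$ is bounded by $\sum_{N\mathfrak p>M}O(N\mathfrak p^{-2})$ plus an error term that is negligible as $r\to\infty$. Choosing $M$ large and sieving the finitely many small primes outside $T$ by the Chinese remainder theorem (the local densities there are positive since $\mathcal Y(\kappa(\mathfrak p))\neq\emptyset$ after enlarging $T$), I get a positive proportion of good points for $r\gg0$. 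Any such $y$ satisfies $y\in Y(K)$, $y_f\in W_f$ and $y_\infty\in\lambda(r)W_\infty$.

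The main obstacle is Step 3. Ekedahl's sieve is usually stated for homogeneously dilated boxes, whereas here the dilation is anisotropic, with weights $a_i$ for $\lambda(r)$, and the setting is over $O_K$ rather than $\mathbb Z$. My first attempt will be Bhargava's version of the geometric sieve, which allows boxes with different side lengths provided all of them tend to infinity. The tail estimate I need then comes from a uniform bound of the form $\#\{y\in\text{box}: y\bmod\mathfrak p\in\mathcal Z\text{ for some }\mathfrak p>M\}=O(\mathrm{vol}/M)+o(\mathrm{vol})$. This bound only needs $\codim\geq2$ and the fact that the minimal side length tends to infinity, and positivity of the weights guarantees the latter.
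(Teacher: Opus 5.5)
There is a genuine gap in Step 3, the step that carries the whole argument. Your Steps 1 and 2 are fine. Step 3, however, relies on a tail estimate you neither prove nor can quote in the form you need.

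Ekedahl's sieve and Bhargava's geometric sieve are stated for homogeneous dilations $r\mathcal B$ of a fixed compact region $\mathcal B$. They do not cover $\lambda(r)W_\infty$, whose sides grow like $r^{a_i}$ with different exponents. So the claim that Bhargava's version ``allows boxes with different side lengths'' is not accurate.

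Nor does the standard proof transfer routinely.
\begin{itemize}
\item Once $N\mathfrak p$ exceeds the shortest side, the residue-class bound $\#\mathcal Z(\kappa(\mathfrak p))\prod_i(B_i/N\mathfrak p+1)$ is no longer $O(\mathrm{vol}\cdot N\mathfrak p^{-2})$. If $k$ sides are shorter than $N\mathfrak p$, it only gives $O\bigl(N\mathfrak p^{\,k-2}\prod_{B_i\geq N\mathfrak p}B_i\bigr)$.
\item For $K=\mathbb Q$ and weights $(1,1,3)$, summing this bound over $r<p<r^3$ gives something of order $r^6/\log r$. The volume is only of order $r^5$.
\item The isotropic proof handles large primes by a generic linear change of variables. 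This makes two coprime equations of $\mathcal Z$ monic in one variable, and a resultant bound follows. That change of variables mixes coordinates of different weights and destroys the weighted box.
\end{itemize}
So your bound $O(\mathrm{vol}/M)+o(\mathrm{vol})$ is plausible, but as written it is an unproved assertion at the critical point. One could try to prove it by cutting $\lambda(r)W_\infty$ into cubes of side $r^{\min a_i}$ and redoing the isotropic argument uniformly in the translation. That is real work you have not done.

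The cleaner fix is the substitution the paper uses. The map $\Phi_b(t)=(b_it_i^{a_i})$ satisfies $\Phi_b(ct)=\lambda(c)\Phi_b(t)$, so it turns the weighted dilation into an ordinary homothety. Also, $\Phi_b^{-1}(Z)$ still has codimension $\geq 2$, since $\Phi_b$ is finite flat when all $b_i\neq0$.

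Choose $b\in Y(K)$ with $b_f\in W_f$, all coordinates nonzero, and real signs making $\Omega=\Phi_{b,\infty}^{-1}(W_\infty)$ non-empty. Then $t=(1,\dots,1)$ meets every finite condition, and $t\in r\Omega$ forces $\Phi_b(t)\in\lambda(r)W_\infty$. Your sieve is then needed only on the homogeneous dilates $r\Omega$, where Ekedahl--Bhargava applies as stated. The point $b$ itself can come from the same isotropic sieve run on a small box in the correct orthant.

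The paper avoids counting altogether:
\begin{itemize}
\item It takes $b$ from Cao--Xu's Proposition 4.6, which gives strong approximation for $Y$ with all real coordinates positive.
\item Instead of sieving in $r\Omega$, it composes with a linear map $\theta$ whose columns $v_j$ lie in a convex ball $B\subset\Omega$. Any rational $z_0$ with positive coordinates then satisfies $\theta(z_0)/r\in B$ for $r=\sum_j z_{0,j}$.
\item A second application of Cao--Xu finishes the case $K=\mathbb Q$, and general $K$ follows by restriction of scalars.
\end{itemize}
Once repaired, your route trades that qualitative positivity input for the quantitative isotropic sieve.
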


\begin{proof}
First assume $K=\mathbb{Q}$. Let $n=\mathrm{dim}(V)$ and 
\[
\lambda(c)(y_{1},...,y_{n})=(c^{a_{1}}y_{1},...,c^{a_{n}}y_{n})
\]

Changing the signs of the weight-basis vectors if necessary, we may assume $W_\infty\cap\mathbb R_{>0}^n\neq \varnothing$ and fix an element $w\in W_\infty\cap\mathbb R_{>0}^n$.
By (\cite{MR3873133}, proposition 4.6), we have
\[
Y(\mathbb{Q})\cap(Y^{+}(\mathbb{R})\times W_{f})\neq \varnothing
\]
Hence there exists $b\in Y(\mathbb{Q})$ such that $b_{f}\in W_{f}$ and $b_{\infty}>0$. Now consider the following morphism:
\[
\Phi_{b}:\mathbb{A}^{n}_{\mathbb{Q}}\to V\cong\mathbb{A}^{n}_{\mathbb{Q}},\quad (t_{i})\mapsto (b_{i}t^{a_{i}}_{i}).
\]
Note that $\Phi_{b}(ct)=\lambda(c)\Phi_{b}(t)$.

Let $Y_{b}:=\Phi_{b}^{-1}(Y)$, since $\Phi_{b}$ is faithfully flat, the preimage $Y_{b}$ is again an open subset of affine space with co-dimensional two complement. Moreover, $\Phi_{b}^{-1}(W_{f})$ is non-empty since it contains $1$, and $\Phi_{b}^{-1}(W_{\infty})$ is non-empty since $(\frac{w_{i}}{b_{i}})^{1/a_{i}}$ maps to $w\in W_{\infty}$.

Let $B\subset \Phi^{-1}_{b,\infty}(W_{\infty})\cap \mathbb{R}^{n}_{+}$ be a convex open ball and let $v_{1},...,v_{n}\in B\cap \mathbb{Q}^{n}$ be linearly independent vectors. Then we define an automorphism of affine space:
\[
\theta:\mathbb{A}^{n}\to \mathbb{A}^{n},\quad z\mapsto \sum^{n}_{j=1}z_{j}v_{j}
\]
For $z\in \mathbb{R}^n_{+}$, define $r(z):=\sum^{n}_{j=1}z_{j}$. Then the convexity of $B$ gives 
\[
\frac{\theta(z)}{r(z)}\in B.
\]
Now let $U_{f}=(\Phi_{b}\circ \theta)^{-1}(W_{f})$. Applying (\cite{MR3873133}, proposition 4.6) to $U_{f}$ yields an element $z_{0}\in \theta^{-1}(Y_{b})(\mathbb{Q}),$ such that
\[
z_{0}\in \mathbb{R}^{n}_{+},\quad (\Phi_{b}\circ \theta)(z_{0})\in W_{f}.
\]
Finally, let $y=\Phi_{b}(\theta(z_{0}))\in Y(\mathbb{Q})$ and $r=r(z_{0})$. Then
\[
y_{\infty}=\lambda(r)\Phi_{b}(\frac{\theta(z_{0})}{r})\in \lambda(r)\Phi_{b}(B)\subset \lambda(r)W_{\infty}.
\]
In the general case, let $d=[K:\mathbb{Q}]$ and take
\[
  \widetilde Y=\Res_{K/\mathbb{Q}}Y\subseteq
  \widetilde V=\Res_{K/\mathbb{Q}}V\simeq\A_\mathbb{Q}^{dn}.
\]
Let $Z=V\setminus Y$. Then base change to $\overline{\mathbb{Q}}$ gives
\[
  (\widetilde V\setminus\widetilde Y)_{\overline{\mathbb{Q}}}
  =\bigcup_{\sigma:K\hookrightarrow\overline{\mathbb{Q}}}
    \left(Z^\sigma\times\prod_{\tau\ne\sigma}V^\tau\right).
\]
Each term has dimension at most $(n-2)+(d-1)n=dn-2$. There are natural identifications
\[
  \begin{aligned}
    \widetilde Y(\mathbb{Q})=Y(K),\quad
    \widetilde Y(\mathbb{R})=Y(K_\infty),\quad
    \widetilde Y(\A_\mathbb{Q}^f)&=Y(\A_K^f).
  \end{aligned}
\]
Restricting the $\lambda$-action to the
diagonal $\Gm\subseteq\Res_{K/\mathbb{Q}}\Gm$ repeats every original
weight $d$ times. Hence the general case follows from the $K=\mathbb{Q}$ case.
\end{proof}

\section{Proof of the main theorem}
Let $K$ be any field of characteristic 0 and let $V$ be a variety with a left $G$-action. For $\xi\in H^1(K,G)$, let $P$ be the corresponding right torsor over $K$. Consider the contracted product:
\[
V^{\xi}=P\times^{G} V:=P\times V/\sim
\]
Here the equivalence relation is defined by $[p,x]\sim [pg,g^{-1}x]$. The variety is called the \textbf{twist} of $V$ by the torsor class $\xi$. 

Now fix a positive linear torsor presentation $(V,Y,G,q,\lambda)$. We need some easy lemma before we get started.
\begin{lem} Let $\xi\in H^1(K,G)$ and consider the twist $V^{\xi}$. Then $V^{\xi}$ is (non-canonically) isomorphic to the affine space, and $\lambda$ descends to $V^{\xi}$ as a cocharacter with positive weights.

\end{lem}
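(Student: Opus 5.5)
The plan is to view $V^{\xi}$ as the twist of the vector space $V$ by the cocycle $\rho_*\xi\in H^1(K,\mathrm{GL}(V))$, and then invoke Hilbert 90. Let $P$ be the right $G$-torsor representing $\xi$. Since the action $\rho$ is linear, the contracted product $P\times^{G}V$ coincides with $(P\times^{G}\mathrm{GL}(V))\times^{\mathrm{GL}(V)}V$. In other words, $V^{\xi}$ is the twist of $V$ by the $\mathrm{GL}(V)$-torsor $\rho_*P$.

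Concretely, over $\overline K$ choose $p\in P(\overline K)$. For $\sigma\in\mathrm{Gal}(\overline K/K)$ write $\sigma(p)=p\,g_\sigma$. Then $V^{\xi}(\overline K)\cong V(\overline K)$ via $[p,x]\mapsto x$, and the Galois action is transported to the twisted action $x\mapsto \rho(g_\sigma)^{-1}\sigma(x)$, up to the convention on inverses. This twisted action is semilinear, so $V^{\xi}$ is a $K$-form of the vector space $V$; that is, it is the vector space scheme attached to a Galois-descended $K$-vector space $V'$. By Hilbert 90, $H^1(K,\mathrm{GL}_n)=1$, so every such semilinear descent datum is trivial. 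Hence $V'\cong K^n$ and $V^{\xi}\cong\mathbb{A}^n_K$. The isomorphism depends on the choice of a trivialization of $\rho_*P$, which is why it is non-canonical.

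For the cocharacter, define $\lambda^{\xi}(c)[p,x]=[p,\lambda(c)x]$. This is well defined because $\lambda$ lands in the centre $Z(G)$. Indeed,
\[
[pg,g^{-1}x]\mapsto [pg,\lambda(c)g^{-1}x]=[pg,g^{-1}\lambda(c)x]\sim[p,\lambda(c)x].
\]
It commutes with the Galois action on $P\times V$ because $\lambda$ is defined over $K$. So $\lambda^{\xi}:\mathbb{G}_m\to\mathrm{GL}(V^{\xi})$ is a $K$-cocharacter, and it is linear since each $\lambda(c)$ is linear. Under the $\overline K$-identification with $V_{\overline K}$ above, $\lambda^{\xi}$ becomes $\lambda$, so its weights, which are geometric invariants, are the same as those of $\lambda$ and in particular strictly positive.

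If moreover $\lambda^{\xi}$ should be diagonalizable over $K$, as used in Proposition \ref{prp:scaled-approximation}, this holds because $\mathbb{G}_m$ is split. Its weight spaces $V^{\xi}_a=\ker(\lambda^{\xi}(c)-c^a)$ are defined over $K$, so a $K$-basis adapted to the weight decomposition exists.

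The only potentially delicate point is checking that the twisted Galois action is semilinear, so that Hilbert 90 applies rather than descent for arbitrary varieties. This follows from the factorization of $\rho_*$ through $\mathrm{GL}(V)$, and the remaining steps are routine.
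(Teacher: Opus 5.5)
Your proposal is correct and follows essentially the paper's route. The twist of a linear representation is a $K$-form of a vector space, hence trivial: the paper says $V^{\xi}$ is a vector bundle over $\mathrm{Spec}(K)$, while you invoke Hilbert 90. The cocharacter $\lambda^{\xi}(t)[p,v]=[p,\rho(\lambda(t))v]$ is well defined because $\lambda$ is central, as in the paper.

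The only minor difference is how the weights are read off. The paper uses the $K$-rational decomposition $V^{\xi}=\bigoplus_a (P\times^{G}V_a)$ into twisted $G$-stable weight spaces, which gives the weights with multiplicities and the $K$-diagonalization in one step. You instead compare with $\lambda$ over $\overline K$ and argue diagonalizability over $K$ separately.
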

\begin{proof} As the action $\rho:G\to \mathrm{GL}(V)$ is linear, the twist $V^{\xi}$ is a vector bundle over $\mathrm{Spec}(K)$ and is isomorphic to the affine space.

Now we define a cocharacter $\lambda^{\xi}:\mathbb{G}_{m}\to \mathrm{GL}(V^{\xi})$ as:
\[
\lambda^{\xi}(t)[p,v]:=[p,\rho(\lambda(t))v].
\]
It is well-defined since $\lambda(t)\in Z(G)$.

Let $V=\bigoplus_a V_a$ for the weight decomposition under
$\rho\circ\lambda$. Each $V_a$ is
$G$-stable Since $\lambda$ is central. Hence
\[
V^\xi=\bigoplus_a (P\times^G V_a).
\]
The induced action of $\mathbb G_m$ on $P\times^G V_a$ is multiplication by $t^a$, and
$\dim_K(P\times^G V_a)=\dim_K V_a$. Thus $\lambda^\xi$ has the same weights as
$\rho\circ\lambda$ with the same multiplicities. In particular,
all weights of $\lambda^{\xi}$ are positive.
\end{proof}
Let $U\subseteq X$ be an open subvariety  with
$\operatorname{codim}_X(X\setminus U)\geq2$, and let
$Y_U=q^{-1}(U)$.
\begin{lem}\label{codim 2}
The open immersion $Y_U\subseteq V$ satisfies
\[
  \codim_V(V\setminus Y_U)\geq2.
\]
\end{lem}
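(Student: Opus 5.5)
We split the complement as
\[
V\setminus Y_U=(V\setminus Y)\cup q^{-1}(X\setminus U),
\]
where $q^{-1}(X\setminus U)=Y\setminus Y_U$ is closed in $Y$. This is a set-theoretic union of two pieces, the first closed in $V$, and the codimension of the closure of a finite union is the minimum of the codimensions of the closures of the pieces. By item 3 of the presentation we already have $\codim_V(V\setminus Y)\geq 2$. So it suffices to show that every irreducible component of the closure of $q^{-1}(X\setminus U)$ in $V$ has codimension at least $2$. Since $Y$ is open and dense in the irreducible variety $V$, dimensions of closures agree with dimensions inside $Y$. Hence we only need
\[
\codim_Y\bigl(q^{-1}(X\setminus U)\bigr)\geq 2 .
\]

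This last inequality follows from the fact that $q$ is a $G$-torsor. A torsor under a smooth group is faithfully flat and smooth of constant relative dimension $d=\dim G$; in characteristic $0$ every group of multiplicative type and every linear algebraic group is smooth. Therefore $\dim Y=\dim X+d$. Moreover, for any irreducible closed subset $T\subset X$, the preimage $q^{-1}(T)$ has pure dimension $\dim T+d$: flat morphisms of finite type with equidimensional fibres preserve codimension, and étale locally on $X$ the preimage is just $T\times G$. Applying this to each irreducible component $T$ of $X\setminus U$, which has $\dim T\le \dim X-2$, gives
\[
\dim q^{-1}(T)\le \dim X-2+d=\dim Y-2 .
\]
Hence $q^{-1}(X\setminus U)$ has codimension at least $2$ in $Y$, and therefore its closure has codimension at least $2$ in $V$.

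Combining the two pieces, $V\setminus Y_U$ is a union of two closed subsets of $V$, namely $V\setminus Y$ and the closure of $q^{-1}(X\setminus U)$, each of codimension at least $2$. This gives the claim. The only point needing care is the codimension-preservation statement under $q$. I would justify it by flatness via the dimension formula $\dim_y q^{-1}(T)=\dim_{q(y)}T+\dim_y q^{-1}(q(y))$ for flat morphisms. Alternatively, one can pass to an étale cover trivializing the torsor, where $q^{-1}(T)\cong T\times G$ and the formula is immediate. I would also check the statement geometrically, over $\overline K$, where $G$ might be disconnected in the multiplicative-type case. This is harmless, since all components of $G$ have the same dimension.
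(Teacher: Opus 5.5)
Your proof is correct and follows essentially the same route as the paper. Both arguments use the smoothness and faithful flatness of the torsor $q$ to get $\codim_Y q^{-1}(X\setminus U)\geq 2$, then pass to the closure in $V$ and combine this with $\codim_V(V\setminus Y)\geq 2$. Your explicit dimension count via the flat dimension formula simply spells out what the paper phrases as a contradiction with a hypothetical codimension-one component.
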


\begin{proof}
Since $q$ is a torsor under the smooth group $G$, it is smooth and faithfully flat.  Therefore
\[
  \codim_Y q^{-1}(X\setminus U)
  =
  \codim_X(X\setminus U)
  \geq2.
\]
Let $Z$ be the closure of $q^{-1}(X\setminus U)$ in $V$.  If $Z$ contains a codimension-one irreducible component $E$, then $E$ could not be contained in $V\setminus Y$.  Hence $E\cap Y$ would be a codimension-one subset of $Y$ contained in $q^{-1}(X\setminus U)$, a contradiction, and the result follows.
\end{proof}
\begin{proof}[Proof of Theorem \ref{main thm}]
Let $x\in U(\mathbb A_K)^{\operatorname{Br}(U)}$. When $G$ is of multiplicative type, we choose $x\in U(\mathbb{A}_{K})^{\mathrm{Br}_{1}(U)}$. Let
$\mathcal N$ be an open neighbourhood of $x$ in
$U(\mathbb A_K)$.

By (\cite{CDX19}, Theorem 3.5) , there exist $\xi\in H^1(K,G)$ and 
  $\eta\in Y_U^\xi(\mathbb A_K)$,
such that $p_U^\xi(\eta)=x$. Let $f=p^{\xi}_{U}$.

The central cocharacter descends to $V^\xi$ with the same
positive weights. We continue to denote it by $\lambda$;
it preserves $Y^{\xi}_{U}$ and satisfies
\[
  f(\lambda(t)y)=f(y).
\]
There exists nonempty open subsets
\[
  W_f\subseteq Y^{\xi}_{U}(\mathbb A_K^f),
  \qquad
  W_\infty\subseteq Y^{\xi}_{U}(K_\infty)
\]
such that
\[
  \eta\in W_f\times W_\infty
  \subseteq f^{-1}(\mathcal N).
\]
By Proposition \ref{prp:scaled-approximation}, there exist
$y\in Y^{\xi}_{U}(K)$ and $r\in\mathbb Q_{>0}$ satisfying
\[
  y_f\in W_f,\qquad
  y_\infty\in\lambda(r)W_\infty.
\]
Consequently,
\[
  \eta':=(y_f,\lambda(r)^{-1}y_\infty)
  \in W_f\times W_\infty.
\]
Since $f$ is invariant under $\lambda$, the element
$u:=f(y)=f(\eta')\in\mathcal N$. And we conclude the theorem as $x$ and $\mathcal N$ were arbitrary.

\end{proof}
\section{Arithmetic purity for toric varieties}

Let $X$ be a smooth toric variety with $K[X]=K$. In this section, we  construct a positive linear torsor presentation for $X$.

$T\subseteq X$ be the dense torus. We denote by
\[
  M=X^*(T_{\olK}),
  \qquad
  N=X_*(T_{\olK}),
\]
the Character and cocharacter group of $T$. Let $\Sigma$ be the fan of $X_{\overline{K}}$ in $N_{\mathbb R}$. For basic facts about fans and toric varieties over non-close fields, see for example \cite{huruguen2011toric}.

\subsection{The universal torsor of toric variety}

Let $\Sigma(1)$ denote the set of rays of $\Sigma$.  For $\rho\in\Sigma(1)$, let $v_\rho\in N$ be its primitive generator and let $D_\rho$ be the corresponding torus-invariant prime divisor.  Let
\[
  \mathcal D=\bigoplus_{\rho\in\Sigma(1)}\mathbb ZD_\rho.
\]
We have the following exact sequence:
\[
  0\longrightarrow M
  \stackrel{\delta}{\longrightarrow}
  \mathcal D
  \longrightarrow P
  \longrightarrow 0,
  \qquad
  P=\Pic(X_{\olK}),
\]
where
\[
  \delta(m)=\sum_{\rho\in\Sigma(1)}
  \langle m,v_\rho\rangle D_\rho.
\]

Let $R$ be the quasi-trivial torus with character module $\mathcal D$, and let $G$ be the group of multiplicative type with character module $P$.  Dualizing the sequence gives an exact sequence of algebraic groups 
\[
  1\longrightarrow G
  \longrightarrow R
  \longrightarrow T
  \longrightarrow 1.
\]
Note that the module $P$ may have torsion

The finite Galois set $\Sigma(1)$ defines a $K$-vector space $V$ such that
\[
  V_{\olK}\simeq\A_{\olK}^{\Sigma(1)}.
\]
Write $z_\rho$ for its geometric coordinates.  The torus $R_{\olK}$ acts by coordinatewise multiplication.  Define
\[
  Y_{\olK}
  =
  \bigcup_{\sigma\in\Sigma}
  \left\{
  z_\rho\neq 0\text{ for every }\rho\notin\sigma(1)
  \right\}.
\]
This open subset is Galois stable and therefore descends to a $K$-open subset $Y\subseteq V$.

See (\cite{cox1995homogeneous}) and (\cite{santens2026manin}, construction 2.9, proposition 2.10) for the next proposition.
\begin{prp}\label{cox torsor}
There is a natural $G$-torsor
\[
  p:Y\longrightarrow X.
\]
Moreover,
\[
  \codim_V(V\setminus Y)\geq 2.
  \tag{5.5}\label{eq:cox-codim}
\]
\end{prp}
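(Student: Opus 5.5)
The plan is to reduce everything to the split situation over $\olK$ and then use Galois descent, since $Y$, $V$, $G$ and $X$ are all defined over $K$ by Galois-stable data. Over $\olK$ the torsor is Cox's quotient construction, so I would first recall it. For each cone $\sigma\in\Sigma$ put
\[
  Y_\sigma=\{z_\rho\neq 0 \text{ for all } \rho\notin\sigma(1)\}\subseteq V_{\olK},
\]
which is an $R$-stable affine open subset. The $G$-invariants of its coordinate ring are
\[
  \olK[z_\rho, z_{\rho'}^{-1}]^{G}
  =\bigoplus_{D\in\mathcal D,\ D\mapsto 0\in P,\ D\geq0 \text{ on } \sigma(1)}\olK\, z^{D}
  =\olK[\sigma^\vee\cap M]=\olK[U_\sigma],
\]
using the exactness of $0\to M\to\mathcal D\to P\to 0$: a monomial $z^D$ is $G$-invariant exactly when $D=\delta(m)$, and the coefficient condition reads $\langle m,v_\rho\rangle\geq0$ for $\rho\in\sigma(1)$. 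These maps $Y_\sigma\to U_\sigma$ glue to a morphism $p:Y_{\olK}\to X_{\olK}$ compatible with $R\to T$.

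Next I would show that $p$ is a $G$-torsor. Smoothness of $X$ means each $\sigma$ is generated by part of a basis of $N$. Extend $\{v_\rho\}_{\rho\in\sigma(1)}$ to a basis of $N$ and choose $m_\rho\in M$ dual to it; then $\delta(m_\rho)=D_\rho+(\text{terms outside }\sigma(1))$. From this one gets a splitting
\[
  Y_\sigma\cong U_\sigma\times G,
\]
coming from the surjection of lattices $\mathbb Z^{\Sigma(1)\setminus\sigma(1)}\to P$, whose surjectivity follows from $P$ being generated by the $D_\rho$ with $\rho\notin\sigma(1)$. This is the torsor property locally. Torsion in $P$ causes no trouble, since $G$ is diagonalizable over $\olK$. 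The morphism is defined over $K$ because the construction is functorial in the Galois-equivariant fan data: $\Gamma_K$ permutes $\Sigma(1)$, acts on $M$, and preserves $\Sigma$. Hence $p$, the $G$-action and the torsor structure all descend, which is the standard argument of \cite{cox1995homogeneous} and \cite{santens2026manin}. In practice I would cite these references for this part and only check the compatibility.

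For the codimension estimate, note that $V\setminus Y$ over $\olK$ is the union over subsets $S\subseteq\Sigma(1)$ with $S\not\subseteq\sigma(1)$ for every $\sigma$ of the linear subspaces $\{z_\rho=0,\ \rho\in S\}$. Its codimension is therefore the minimal size of such a "primitive collection" $S$. A singleton $S=\{\rho\}$ always lies in the cone $\rho\in\Sigma$, so every such $S$ has at least two elements and the codimension is at least $2$. The condition $K[X]=K$ is not needed for this step.

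The main obstacle is the torsor (local triviality) step over $\olK$ in the presence of torsion in $P$, together with making the descent precise. If a clean citation is not available, I would handle both by the explicit invariant-ring computation above.
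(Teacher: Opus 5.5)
Your overall route is the standard argument behind the references the paper cites: Cox's construction over $\olK$, Galois descent, and the primitive-collection count for $\codim_V(V\setminus Y)\geq 2$. The paper itself gives no proof, only the citation. Your invariant-ring computation and your codimension argument are correct.

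The step that fails is the claimed splitting $Y_\sigma\cong U_\sigma\times G$, together with your assertion that torsion in $P$ ``causes no trouble''. Take $\sigma=\{0\}$. Then $Y_{\{0\}}=R_{\olK}$ is a torus, hence connected, whereas $T_{\olK}\times G_{\olK}$ has $|P_{\mathrm{tors}}|$ connected components. So no such isomorphism exists once $P$ has torsion. In fact $R_{\olK}\to T_{\olK}$ then has no section at all: every morphism of split tori is a translate of a homomorphism, and $0\to M\to\mathcal D\to P\to 0$ does not split.

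This case really occurs under the paper's hypotheses, which is why the paper remarks that $P$ may have torsion. Consider the fan in $N=\mathbb Z^2$ consisting of $\{0\}$ and the four rays through $(\pm1,\pm1)$. It is smooth, and its rays positively span $N_{\mathbb R}$, so $K[X]=K$. But the rays generate a sublattice of index $2$, so $P\cong\mathbb Z^2\oplus\mathbb Z/2\mathbb Z$. Thus $p$ is in general not Zariski-locally trivial. Your fallback, the invariant-ring computation, only identifies the categorical quotient $Y_\sigma/\!/G$ with $U_\sigma$; it does not by itself give freeness or the torsor property.

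The repair uses your surjection $\mathbb Z^{\Sigma(1)\setminus\sigma(1)}\to P$ correctly: it says exactly that $G_{\olK}\to\mathbb G_m^{\Sigma(1)\setminus\sigma(1)}$ is a closed immersion.

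\begin{itemize}
\item For $\rho\in\sigma(1)$, replace $z_\rho$ by $w_\rho=z^{\delta(m_\rho)}$, which is $z_\rho$ times a unit monomial in the $z_\tau$ with $\tau\notin\sigma(1)$.
\item This gives a $G$-equivariant isomorphism $Y_\sigma\cong\mathbb A^{\sigma(1)}\times\mathbb G_m^{\Sigma(1)\setminus\sigma(1)}$, in which $G$ acts only on the second factor, through that closed subgroup.
\item The sequence $0\to\sigma^\perp\cap M\to\mathbb Z^{\Sigma(1)\setminus\sigma(1)}\to P\to 0$ is exact. This follows from the sequence $0\to M\to\mathcal D\to P\to 0$, because $M\to\mathbb Z^{\sigma(1)}$ is onto with kernel $\sigma^\perp\cap M$.
\item Hence the quotient map from $\mathbb G_m^{\Sigma(1)\setminus\sigma(1)}$ onto the torus with character group $\sigma^\perp\cap M$ is an fppf (here \'etale) $G$-torsor.
\item Finally, $\mathbb A^{\sigma(1)}$ times this torus is $U_\sigma$.
\end{itemize}

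So each $Y_\sigma\to U_\sigma$, and hence $p$, is a $G$-torsor for the fppf topology. That is all the later argument needs, namely the descent theorem of Cao--Demarche--Xu in Section 3. With this correction, and the Galois descent as you describe it, your argument proves the proposition.
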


\subsection{A positive cocharacter}

\begin{lem} \label{positive weight}
There are positive integers $a_\rho>0$, constant on Galois orbits in $\Sigma(1)$, such that
\[
  \sum_{\rho\in\Sigma(1)}a_\rho v_\rho=0.
  \tag{5.7}\label{positive-relation}
\]
Moreover, there is a $K$-cocharacter
\[
  \lambda:\Gm\longrightarrow G
\]
whose action on $V_{\olK}$ is
\[
  \lambda(t)\cdot z_\rho=t^{a_\rho}z_\rho.
\]
In particular, all weights of $\lambda$ on $V$ are strictly positive.
\end{lem}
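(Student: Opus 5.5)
The plan is to derive the positive relation from the hypothesis $K[X]=K$ by convex duality, average it over the Galois action, and then read off $\lambda$ by dualizing the exact sequence $0\to M\to\mathcal D\to P\to 0$.

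\textbf{Step 1: a positive relation over $\mathbb{Q}$.} Since $\overline{K}[X]^{\star}$ and $\overline{K}[X]$ are computed from the fan, the condition $K[X]=K$ (which, after flat base change, gives $\overline{K}[X]=\overline{K}$) means that no nonzero $m\in M$ satisfies $\langle m,v_\rho\rangle\geq0$ for all $\rho\in\Sigma(1)$. Indeed, $\overline{K}[X]=\bigoplus_{m\in|\Sigma|^\vee\cap M}\overline{K}\chi^m$, where $|\Sigma|^\vee=\{m:\langle m,v_\rho\rangle\ge 0\ \forall\rho\}$. This dual cone is therefore $\{0\}$, so the cone $C$ generated by the $v_\rho$ is all of $N_{\mathbb R}$. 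In particular $-\sum_\rho v_\rho$ lies in $C$, so $-\sum_\rho v_\rho=\sum_\rho c_\rho v_\rho$ with rational $c_\rho\geq0$ (rational because the data are rational, by Farkas/Carathéodory over $\mathbb{Q}$). This gives $\sum_\rho(1+c_\rho)v_\rho=0$ with all coefficients strictly positive. Clearing denominators yields positive integers $b_\rho$ with $\sum_\rho b_\rho v_\rho=0$.

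\textbf{Step 2: Galois invariance.} The Galois group $\Gamma$ of a splitting field acts on $N$ and permutes $\Sigma(1)$ compatibly, with $\sigma(v_\rho)=v_{\sigma\rho}$. Set $a_\rho=\sum_{\sigma\in\Gamma}b_{\sigma^{-1}\rho}$. Applying $\sigma$ to the relation of Step 1 and summing over $\sigma$ shows $\sum_\rho a_\rho v_\rho=0$. Moreover $a_\rho>0$ and $a_\rho$ is constant on Galois orbits. This proves \eqref{positive-relation}.

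\textbf{Step 3: the cocharacter.} Cocharacters $\mathbb{G}_m\to R$ over $\overline{K}$ correspond to $\mathrm{Hom}(\mathcal D,\mathbb Z)=\mathbb Z^{\Sigma(1)}$. Take $\mu=(a_\rho)_\rho$; it is $\Gamma$-invariant because $a_\rho$ is constant on orbits, so $\mu$ is defined over $K$. Its image in $X_*(T)=N$ is obtained by composing with $\delta$, i.e.\ $m\mapsto\sum_\rho a_\rho\langle m,v_\rho\rangle=\langle m,\sum_\rho a_\rho v_\rho\rangle=0$. Hence $R\to T$ kills $\mu$. Since $1\to G\to R\to T\to1$ is exact as the dual of an exact sequence of character modules, the cocharacter factors through $G=\ker(R\to T)$, giving a $K$-cocharacter $\lambda:\mathbb{G}_m\to G$. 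Equivalently, $\mu:\mathcal D\to\mathbb Z$ vanishes on $\delta(M)$ and so induces a $\Gamma$-equivariant map $P\to\mathbb Z$. The group $G$ is commutative, so $\lambda$ is automatically central. Because $R$ acts on $V$ by coordinatewise multiplication, $\lambda(t)z_\rho=t^{a_\rho}z_\rho$, and all weights are positive.

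\textbf{Main obstacle.} The only subtle point is Step 1: translating $K[X]=K$ into the statement that the rays positively span $N_{\mathbb R}$, and producing an integral strictly positive relation. I would handle this with the toric description of global functions and Farkas' lemma, as above.
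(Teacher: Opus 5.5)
Your proposal is correct and follows essentially the same route as the paper. You deduce $C=N_{\mathbb R}$ from $\overline K[X_{\overline K}]=\overline K$ by cone duality, produce a strictly positive rational relation, and average it over $\Gamma$ via $a_\rho=\sum_{\gamma}b_{\gamma^{-1}\rho}$. You then dualize the Galois-equivariant functional $\mathcal D\to\mathbb Z$, $D_\rho\mapsto a_\rho$, which kills $\delta(M)$, to obtain $\lambda:\Gm\to G$.

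The differences are cosmetic. You write the single vector $-\sum_\rho v_\rho$ as a nonnegative combination of the $v_\rho$, whereas the paper does this for each $-v_\rho$ and then sums. You justify rationality of the coefficients via Carath\'eodory, which the paper does not. You leave implicit the standard step that $C^\vee\cap M=\{0\}$ forces $C^\vee=0$ because $C^\vee$ is rational polyhedral, which the paper states.
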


\begin{proof}
. Let
$C=\sum_{\rho\in\Sigma(1)}\mathbb R_{\geq0}v_\rho
\subseteq N_{\mathbb R}$,
and let $C^\vee\subseteq M_{\mathbb R}$ be its dual cone.
Inside the group algebra $\overline{K}[M]$, restriction
to the torus identifies
\[
\overline K[X_{\overline K}]
=
\bigcap_{\sigma\in\Sigma}
\overline K[\sigma^\vee\cap M]
=
\overline K[C^\vee\cap M]
\subseteq \overline K[M].
\]
It follows that $C^\vee\cap M=\{0\}$,
and we conclude that $C^{\vee}=0$ since $C^\vee$ is a rational polyhedral cone. Thus $C=(C^\vee)^\vee=N_{\mathbb R}$ by the duality of closed
convex cones.

Consequently,  we have
$-v_\rho\in C$ for every $\rho\in\Sigma(1)$. Choose $b_{\rho\tau}\in\mathbb Q_{\geq0}$ such that
$v_\rho+\sum_{\tau\in\Sigma(1)}b_{\rho\tau}v_\tau=0$.
Summing these relations over $\rho$, we obtain
\[
\sum_{\tau\in\Sigma(1)}
\left(1+\sum_{\rho\in\Sigma(1)}b_{\rho\tau}\right)v_\tau=0.
\]
Every coefficient is a strictly positive rational number. Clearing denominators gives positive integers $n_\tau$ such that
$\sum_\tau n_\tau v_\tau=0$.

Let $L/K$ be the splitting field of $T$ and let $\Gamma=\mathrm{Gal}(L/K)$. For each $\rho$, let
$a_\rho=\sum_{\gamma\in\Gamma}n_{\gamma^{-1}\rho}$, then we have
\[
\sum_{\rho\in\Sigma(1)}a_\rho v_\rho
=
\sum_{\gamma\in\Gamma}
\sum_{\tau\in\Sigma(1)}n_\tau v_{\gamma\tau}
=
\sum_{\gamma\in\Gamma}
\gamma\left(\sum_{\tau\in\Sigma(1)}n_\tau v_\tau\right)
=0.
\]

Define $\ell:D\to\mathbb Z$ by $\ell(D_\rho)=a_\rho$, then $\ell$ is
Galois-equivariant. For every $m\in M$, we have
\[
\ell(\delta(m))
=
\sum_{\rho\in\Sigma(1)}
a_\rho\langle m,v_\rho\rangle
=
\left\langle m,\sum_{\rho\in\Sigma(1)}a_\rho v_\rho\right\rangle
=0.
\]
Hence $\ell$ factors through a Galois-equivariant homomorphism
$\overline\ell:P\to\mathbb Z$.
Taking the dual of $\overline{\ell}$ yields a
$K$-cocharacter $\lambda:\mathbb G_m\to G$.

The composite $\mathbb G_m\xrightarrow{\lambda}G\hookrightarrow R$
is the dual of $\ell:D\to\mathbb Z$.
Consequently, after base change to $\overline K$, the map sends
$t$ to $(t^{a_\rho})_{\rho\in\Sigma(1)}$, and its induced action on $V_{\overline{K}}$ is
\[
(z_\rho)_{\rho\in\Sigma(1)}
\longmapsto
(t^{a_\rho}z_\rho)_{\rho\in\Sigma(1)}
\]
which proves our claim.
\end{proof}
\begin{proof}[Proof of corollary \ref{Purity of toric}] Let $q:Y\to X$ be the torsor defined in propostion \ref{cox torsor}, and $\lambda:\mathbb{G}_{m}\to G$ be the cocharacter defined in lemma \ref{positive weight}. Then the datum $(G,V,Y,q,\lambda)$
 is a postive linear torsor presentation for $X$. Since $G$ is of multiplicative type, we conclude the result by theorem \ref{main thm}.
\end{proof}
\section{Rational curves and linear positive torsor presentation}
Let $X$ be a geometrically integral variety with a positive linear torsor presentation $(G,V,Y,q,\lambda)$ over $K$ with $\mathrm{char}(K)=0$, we explain when is it possible to connect two rational points $x,y\in X(K)$ by $\mathbb{P}^1_{K}$.
\begin{lem}\label{avoid}
Let $U\subseteq W\simeq\A_K^n$ be a nonempty open subset such that $\codim_W(W\setminus U)\ge2$. Assume moreover $U$ is stable under
\[
 \lambda(r)(z_1,\ldots,z_n)=(r^{a_1}z_1,\ldots,r^{a_n}z_n),
 \qquad a_i\in\mathbb{Z}_{>0}.
\]
 Let $p:U\to X$ be a $K$-morphism invariant under the action of $\lambda$. Then for any $u,v\in U(K)$, there is a $K$-morphism $f:\mathbb{P}^1_K\to X$ with
\[
 f(p_0)=p(u),\qquad f(p_\infty)=p(v).
\]
\end{lem}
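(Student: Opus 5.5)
Since $p$ is $\lambda$-invariant, it factors through a weighted projective space. The plan is to join $u$ and $v$ by a weighted line in $W$ that stays inside $U$ away from the origin; this gives a morphism from $\mathbb{P}^1$.

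Consider the map
\[
 \phi:\A^2_K\setminus\{0\}\to W,\qquad (s,t)\mapsto \bigl(s^{a_1}u_1+t^{a_1}w_1+\dots,\ \ldots\bigr),
\]
or more generally
\[
 \phi(s,t)_i=\sum_{k=0}^{a_i} c_{i,k}\,s^{k}t^{a_i-k},
\]
with each coordinate homogeneous of degree $a_i$. Then $\phi(rs,rt)=\lambda(r)\phi(s,t)$. We impose $\phi(1,0)=u$ and $\phi(0,1)=v$, that is, $c_{i,a_i}=u_i$ and $c_{i,0}=v_i$. The intermediate coefficients are free when $a_i\ge 2$. When $a_i=1$ there are none, and the coordinate is linear, $\phi_i=u_is+v_it$.

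If $\phi(\A^2\setminus\{0\})\subseteq U$, then $p\circ\phi$ is $\mathbb{G}_m$-invariant, since $p$ is $\lambda$-invariant. It therefore descends to $f:\mathbb{P}^1_K\to X$ with $f(p_0)=p(u)$ and $f(p_\infty)=p(v)$. The descent holds because $\A^2\setminus\{0\}\to\mathbb{P}^1$ is a $\mathbb{G}_m$-torsor and hence a categorical quotient.

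The main obstacle is avoiding $Z=W\setminus U$, which has codimension $\ge2$ and is $\lambda$-stable. Since $Z$ is $\lambda$-stable, it suffices that the image of $\phi$, which is a $\lambda$-stable surface $S$, meets $Z$ in no point outside the image of $0$. Equivalently, the curve $\phi(\mathbb{P}^1)$ in the weighted projective space $\mathbb{P}(a_1,\dots,a_n)$ should avoid the image of $Z$, which has codimension $\ge2$ there.

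To gain freedom, I would not rely on the intermediate coefficients alone, since they may not exist. Instead I would use a parameter family. First replace $u,v$ by points close to them in the same $\lambda$-orbit-free sense. Better, I would use chains through a general third point and the freedom of reparametrising by $s\mapsto s^m$. Concretely, choose $m$ large and take
\[
 \phi_i(s,t)=u_is^{ma_i}+v_it^{ma_i}+\sum_{0<k<ma_i} c_{i,k}s^kt^{ma_i-k}.
\]
This is still compatible with $\lambda$, since $\phi(rs,rt)=\lambda(r^m)\phi(s,t)$, so $p\circ\phi$ remains invariant. Now there are many free coefficients $c_{i,k}$. The space of such $\phi$ is an affine space $\mathcal{M}$ over $K$.

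Then I would do a dimension count on the incidence variety $I=\{(\phi,[s:t]) : \phi(s,t)\in Z,\ [s:t]\ne 0,\infty\}$. For fixed $[s:t]$ with $st\ne0$, the evaluation map $\mathcal{M}\to W$ is affine-linear and surjective, because each coordinate has a free monomial. Hence the fibre of $I$ over $[s:t]$ has codimension $\ge2$ in $\mathcal{M}$. It follows that $\dim I\le\dim\mathcal{M}-1$, so the projection $I\to\mathcal{M}$ is not dominant.

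The endpoints $[1:0]$ and $[0:1]$ map to $\lambda$-orbits of $u$ and $v$, which lie in $U$ by $\lambda$-stability. Thus a general $\phi\in\mathcal{M}(K)$ avoids $Z$ entirely; such a $\phi$ exists since $K$ is infinite and $\mathcal{M}$ is an affine space. Some care is needed because $\phi$ evaluated at $(s,t)$ equals $\lambda$ applied to the value at a representative, and $Z$ is $\lambda$-stable, so the condition depends only on $[s:t]$.
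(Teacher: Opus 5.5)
Your final construction is essentially the paper's: the paper takes $m=2$ with only the middle monomial $(ST)^{a_i}$ free in each coordinate. It shows the incidence locus over $D_+(ST)$ is $\Psi^{-1}(Z\times\mathbb{G}_m)$ for an explicit isomorphism $\Psi:C\times\mathbb{G}_m\to W\times\mathbb{G}_m$, which is the same computation as your fibrewise affine-linear surjectivity count; it then picks a rational parameter off the projection and glues the two charts by $\lambda$-invariance rather than citing the torsor $\mathbb{A}^2\setminus\{0\}\to\mathbb{P}^1$. Note that $m\ge 2$ already suffices, and the exploratory detours (perturbing $u,v$, chains through a third point) are unnecessary and should be dropped, since the lemma asks for a single $\mathbb{P}^1$.
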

\begin{proof}
Let $Z=W\setminus U$ with its reduced scheme structure, and
let $u=(u_i)$ and $v=(v_i)$ be their coordinates. Let $C=\mathbb A^n_K$ with
coordinates $c_1,\ldots,c_n$ and let $[S:T]$ be the homogeneous coordinates of $\mathbb P^1_K$
with $0=[1:0]$ and $\infty=[0:1]$. Define
\[
F:C\times_K(\mathbb A^2_K\setminus\{0\})\longrightarrow W,
\qquad
F(c,S,T)=
\bigl(u_iS^{2a_i}+c_i(ST)^{a_i}+v_iT^{2a_i}\bigr)_{i=1}^n.
\]
Then $F(c,1,0)=u$, $F(c,0,1)=v$, and $F(c,rS,rT)=\lambda(r^2)F(c,S,T)$.

The ideal of $Z$ is homogeneous for
the grading $\deg z_i=a_i$ since $Z$ is $\lambda$-stable. Thus we may choose weighted homogeneous
generators $h_1,\ldots,h_b$ of weighted degrees
$d_1,\ldots,d_b$. Each $h_j(F(c,S,T))$ is homogeneous of
degree $2d_j$ in $S,T$. Thus these equations define a
closed subscheme
\[
\mathcal M=
\bigl\{(c,[S:T])\in C\times_K\mathbb P^1_K:
F(c,S,T))\in Z\}.
\]
Note that the fibres of $\mathcal{M}\to \mathbb{P}^1$ over $0$ and $\infty$ are empty, since
$u,v\notin Z$.

We identify $D_+(ST)\subseteq\mathbb P^1_K$ with $\mathbb G_m$
via $\tau=T/S$. Consider the morphism
\[
\begin{aligned}
\Psi:C\times_K\mathbb G_m
&\longrightarrow W\times_K\mathbb G_m,\\
(c,\tau)
&\longmapsto
\left(
\bigl(u_i+c_i\tau^{a_i}+v_i\tau^{2a_i}\bigr)_{i=1}^n,
\tau
\right).
\end{aligned}
\]
It is an isomorphism, with inverse
\[
(z,\tau)\longmapsto
\left(
\bigl(\tau^{-a_i}(z_i-u_i-v_i\tau^{2a_i})\bigr)_{i=1}^n,
\tau
\right).
\]

Under the identification $D_+(ST)\simeq\mathbb G_m$, we have
\[
\mathcal M\cap(C\times_K D_+(ST))
=\Psi^{-1}(Z\times_K\mathbb G_m).
\]
In particular, we have $\dim\mathcal M=\dim Z+1\leq n-1$.

Let $pr_{2}:C\times_K\mathbb P^1_K\to C$
and $B=\operatorname{pr}_2(\mathcal M)$. 
Then $B\neq C$, and we may choose a rational point
$c_0\in(C\setminus B)(K)$.
Then $F(c_0,S,T)\notin Z$ for every geometric point
$(S,T)\neq(0,0)$, thus $F(c_0,-,-)$ factors through $U$.

On the charts $S\neq0$ and $T\neq0$ of $\mathbb{P}_{K}^1$, we write the coordinates
$\tau=T/S$ and $\sigma=S/T$ respectively, and define
\[
\begin{aligned}
f_S:\mathbb A^1_{K,\tau}&\longrightarrow X,
&\tau&\longmapsto p(F(c_0,1,\tau)),\\
f_T:\mathbb A^1_{K,\sigma}&\longrightarrow X,
&\sigma&\longmapsto p(F(c_0,\sigma,1)).
\end{aligned}
\]
On the overlap, we have $\sigma=\tau^{-1}$ and
$F(c_0,\sigma,1)
=\lambda(\sigma^2)F(c_0,1,\sigma^{-1})$.
The $\lambda$-invariance of $p$ gives
$f_T(\sigma)=f_S(\sigma^{-1})$.
Thus these morphisms glue to a $K$-morphism
$f:\mathbb P^1_K\to X$ satisfying
$f(0)=p(u)$ and $f(\infty)=p(v)$.
\end{proof}

\begin{proof}[Proof of Theorem \ref{P^1-connected}]
The equivalence of (i) and (ii) follows from the fact that:
\[
x\in q^{\xi}(Y^{\xi}(K))\Leftrightarrow [Y_{x}]=\xi\in H^1(K,G).
\]

(ii) implies (iii): This is a direct consequence of (lemma \ref{avoid}).

(iii) implies (iv): Trivial.

(iv) implies (i): If $x,y$ is connected by one chain, let $f:\mathbb P^1_K\to X$ with $f(0)=x$ and $f(\infty)=y$.
Denote $P=Y_x$. The Scheme
\[
  \mathcal T
  :=\underline{\operatorname{Isom}}_G
       (P\times_K\mathbb P^1_K,f^*Y)
\]
is a torsor under the smooth $K$-group $\operatorname{Aut}_G(P)$ with
 trivial fibre at $0$.
By \cite[Theorem 7.2.1]{bouthier2025generically}, its fibre at $\infty$ is
also trivial. Thus $Y_y\simeq P=Y_x$, and hence
\[
  \delta_q(x)=\delta_q(y).
\]
In the general case, apply the argument repeatedly. 

Finally, let $\xi\in H^1(K,G)$ be an arbitrary element. By Lemma 3.1,
$V^\xi$ is isomorphic to $\mathbb A_K^{\dim_K V}$.
Since $K$ is infinite, every nonempty
open subset of affine space has a $K$-point. Therefore we may
choose $y\in Y^\xi(K)$ and let $x=q^\xi(y)\in X(K)$. Then
we have $\delta_q(x)=\xi$, thus $\delta_q$ is surjective.
Together with the equivalence of (i) and (iv), we conclude that $X(K)/\sim_{\mathbb{P}^1,\mathrm{ch}} \cong  H^1(K,G)$.
\end{proof}

\textbf{Acknowledgments.} The authors thank Yang cao for helpful discussions.
The author is partially supported by National Key R\&D Program of China Grant No. 2025YFA1017300 
(subprogram No. 2025YFA1017303).

\textbf{The usage of LLM model.} The middle part of the proof of Lemma 5.1 was completed by ChatGPT 6.0 Astra. All other parts were completed by the author.

\bibliographystyle{alpha}
\bibliography{ref}

@article {CDX19,
    AUTHOR = {Cao, Yang and Demarche, Cyril and Xu, Fei},
     TITLE = {Comparing descent obstruction and {B}rauer-{M}anin obstruction
              for open varieties},
   JOURNAL = {Trans. Amer. Math. Soc.},
  FJOURNAL = {Transactions of the American Mathematical Society},
    VOLUME = {371},
      YEAR = {2019},
    NUMBER = {12},
     PAGES = {8625--8650},
      ISSN = {0002-9947,1088-6850},
   MRCLASS = {14G05 (14F22)},
  MRNUMBER = {3955558},
MRREVIEWER = {Brendan\ Creutz},
       DOI = {10.1090/tran/7567},
       URL = {https://doi.org/10.1090/tran/7567},
}

@book {CT21,
    AUTHOR = {Colliot-Th\'el\`ene, Jean-Louis and Skorobogatov, Alexei N.},
     TITLE = {The {B}rauer-{G}rothendieck group},
    SERIES = {Ergebnisse der Mathematik und ihrer Grenzgebiete. 3. Folge. A
              Series of Modern Surveys in Mathematics [Results in
              Mathematics and Related Areas. 3rd Series. A Series of Modern
              Surveys in Mathematics]},
    VOLUME = {71},
 PUBLISHER = {Springer, Cham},
      YEAR = {2021},
     PAGES = {xv+453},
      ISBN = {978-3-030-74247-8; 978-3-030-74248-5},
   MRCLASS = {14F22 (14E08 14G05 14G12 14K05)},
  MRNUMBER = {4304038},
MRREVIEWER = {Thomas\ Benedict\ Williams},
       DOI = {10.1007/978-3-030-74248-5},
       URL = {https://doi.org/10.1007/978-3-030-74248-5},
}

@book {Po23,
    AUTHOR = {Poonen, Bjorn},
     TITLE = {Rational points on varieties},
    SERIES = {Graduate Studies in Mathematics},
    VOLUME = {186},
 PUBLISHER = {American Mathematical Society, Providence, RI},
      YEAR = {2017},
     PAGES = {xv+337},
      ISBN = {978-1-4704-3773-2},
   MRCLASS = {14G05 (11G35)},
  MRNUMBER = {3729254},
MRREVIEWER = {Daniel\ Loughran},
       DOI = {10.1090/gsm/186},
       URL = {https://doi.org/10.1090/gsm/186},
}

@article{wittenberg2016rational,
  title={Rational points and zero-cycles on rationally connected varieties over number fields},
  author={Wittenberg, Olivier},
  journal={arXiv preprint arXiv:1604.08543},
  year={2016}
}

@inproceedings{CX18,
  title={Strong approximation with Brauer--Manin obstruction for toric varieties},
  author={Cao, Yang and Xu, Fei},
  booktitle={Annales de l'Institut Fourier},
  volume={68},
  number={5},
  pages={1879--1908},
  year={2018}
}

@article{wei2021strong,
  title={Strong approximation for a toric variety},
  author={Wei, Da Sheng},
  journal={Acta Mathematica Sinica, English Series},
  volume={37},
  number={1},
  pages={95--103},
  year={2021},
  publisher={Springer}
}

@article{CLX19,
  title={Arithmetic purity of strong approximation for homogeneous spaces},
  author={Cao, Yang and Liang, Yongqi and Xu, Fei},
  journal={Journal de Math{\'e}matiques Pures et Appliqu{\'e}es},
  volume={132},
  pages={334--368},
  year={2019},
  publisher={Elsevier}
}

@article{CH20,
  title={Arithmetic purity of strong approximation for semi-simple simply connected groups},
  author={Cao, Yang and Huang, Zhizhong},
  journal={Compositio Mathematica},
  volume={156},
  number={12},
  pages={2628--2649},
  year={2020},
  publisher={London Mathematical Society}
}

@article{Chen24,
  title={Arithmetic purity of strong approximation for complete toric varieties},
  author={Chen, Sheng},
  journal={Mathematische Zeitschrift},
  volume={306},
  number={2},
  pages={23},
  year={2024},
  publisher={Springer}
}

@article{WXZ26,
  title={Arithmetic purity of strong approximation for toric varieties with constant global sections},
  author={Wei, Dasheng and Xu, Fei and Zhu, Yi},
  journal={arXiv preprint arXiv:2608.28204},
  year={2026}
}

@article{santens2026manin,
  title={Manin's conjecture for integral points on toric varieties},
  author={Santens, Tim},
  journal={Proceedings of the London Mathematical Society},
  volume={133},
  number={2},
  pages={e70203},
  year={2026},
  publisher={Wiley Online Library}
}

@article {MR3873133,
    AUTHOR = {Cao, Yang and Xu, Fei},
     TITLE = {Strong approximation with {B}rauer-{M}anin obstruction for
              groupic varieties},
   JOURNAL = {Proc. Lond. Math. Soc. (3)},
  FJOURNAL = {Proceedings of the London Mathematical Society. Third Series},
    VOLUME = {117},
      YEAR = {2018},
    NUMBER = {4},
     PAGES = {727--750},
      ISSN = {0024-6115,1460-244X},
   MRCLASS = {14G05 (11G35 14F22 14G25 20G15)},
  MRNUMBER = {3873133},
MRREVIEWER = {Sho\ Tanimoto},
       DOI = {10.1112/plms.12141},
       URL = {https://doi.org/10.1112/plms.12141},
}

@article{bouthier2025generically,
  title={Generically trivial torsors under constant groups},
  author={Bouthier, Alexis and Cesnavicius, Kestutis and Scavia, Federico},
  journal={arXiv preprint arXiv:2505.00505},
  year={2025}
}

@article {wei21,
    AUTHOR = {Wei, Da Sheng},
     TITLE = {Strong approximation for a toric variety},
   JOURNAL = {Acta Math. Sin. (Engl. Ser.)},
  FJOURNAL = {Acta Mathematica Sinica (English Series)},
    VOLUME = {37},
      YEAR = {2021},
    NUMBER = {1},
     PAGES = {95--103},
      ISSN = {1439-8516,1439-7617},
   MRCLASS = {11G35 (14G05 14M25)},
  MRNUMBER = {4204537},
MRREVIEWER = {Zhizhong\ Huang},
       DOI = {10.1007/s10114-021-8193-7},
       URL = {https://doi.org/10.1007/s10114-021-8193-7},
}

@inproceedings{colliot1977r,
  title={La $ R $-{\'e}quivalence sur les tores},
  author={Colliot-Th{\'e}l{\`e}ne, Jean-Louis and Sansuc, Jean-Jacques},
  booktitle={Annales scientifiques de l'{\'E}cole Normale Sup{\'e}rieure},
  volume={10},
  number={2},
  pages={175--229},
  year={1977}
}

@article{cox1995homogeneous,
  title={The homogeneous coordinate ring of a toric variety},
  author={Cox, David A},
  journal={Journal of Algebraic Geometry},
  volume={4},
  number={1},
  pages={17--50},
  year={1995},
  publisher={Providence, RI: University Press, c1992-}
}

@article{huruguen2011toric,
  title={Toric varieties and spherical embeddings over an arbitrary field},
  author={Huruguen, Mathieu},
  journal={Journal of Algebra},
  volume={342},
  number={1},
  pages={212--234},
  year={2011},
  publisher={Elsevier}
}
\end{document}